\newtheorem{thm}{Theorem}[section]
\newtheorem{cor}{Corollary}[section]
\newtheorem{lemma}{Lemma}[section]
\newtheorem{claim}{Claim}[section]
\newtheorem{defn}{Definition}
\newtheorem{conj}{Conjecture}
\newtheorem{rmk}{Remark}
\newtheorem{prop}{Proposition}
\newcommand{\recsum}{\boxplus}
\DeclareMathOperator{\Dis}{Dis}
\DeclareMathOperator{\sign}{sign}
\title{A notion of entropy on the roots of polynomials}
\date{\today}
\author{Aurelien Gribinski\\ 
Princeton University
}
\begin{document}
\maketitle

\begin{abstract}
We introduce a canonical notion of entropy for polynomials analogue to that of random variables in probability. We prove that entropy increases smoothly with respect to finite free addition (see \cite{MSSffp}). In particular we get the new inequality : $ \Dis(p-tp')>\Dis(p) $ for a polynomial $p$,  its derivative $p'$ and t any non zero real. 
\end{abstract}
\section{Introduction}

\subsection{From discriminants to entropy}
 The discriminant of a polynomial is a fundamental quantity in number theory, and yet few inequalities involving discriminants are known that are true for all polynomials with real roots. Most of them are trivial re-writing of the considered quantities. Fix a polynomial of degree $d$, $p(x)= \prod_{i=1}^d (x- \lambda_i)$. Recall the following somewhat canonical definition of the discriminant of $p$, which matches the canonical one for monic polynomials (in all the following we will restrict without loss of generality to monic polynomials):
 \[
 \Dis(p)= \prod_{i < j} (\lambda_i(p)-\lambda_j(p))^2.
 \]
 Notice that defined this way, the discriminant is always positive for real-rooted polynomials with
simple roots.

The discriminant  represents some sort of average pair-wise spacing between all the roots of the polynomial. Therefore, a high  value discriminant amounts to a polynomial with well-spread roots over the real line, whereas a small quantity indicates roots close overall to one another- not well separated from each other (it suffices that two of them are very close to make the discriminant small).In this line of thought, recall that derivating increases in some sense the spreading of the roots. For instance, if a polynomial has a multiple root, derivating  splits the multiple root into
single distinct roots one by one. Other people have studied the effects of  the derivative on the roots of a polynomial in different ways, see \cite{RS} for an overview. In terms of discriminants, it is not possible to compare any normalized version of $\Dis(p)$ with $\Dis(p')$. However, and it constitutes the base case of our result, we can compare the discriminant of a polynomial to which we add (or substract) a scalar multiple of its derivative to the original discriminant. We will prove that:
\begin{thm} \label{basecase}
$\Dis( p-t \alpha p')$ is decreasing for $t<0$ and increasing for $t>0$. In particular, it has a global minimum at zero.
In particular for all $\alpha \in \mathbb{R^{\star}} (\text{set of all real numbers except zero})$,
\[
\Dis(p- \alpha p') > \Dis(p).
\]
\end{thm}
The following result follows by iteration.
\begin{cor}
Let $r(x)=\prod_{i=1}^d(1-\alpha_i x)$, then, if $D$ denotes the derivative operator, and if at least one $\alpha_i$ is nonzero,
\[
\Dis(r(D)p) > \Dis(p).
\]
\end{cor}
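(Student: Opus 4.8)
The plan is to realize the operator $r(D)$ as a composition of first-order operators and then to apply Theorem~\ref{basecase} once for each nonzero $\alpha_i$. Since $r(x)=\prod_{i=1}^d(1-\alpha_i x)$ is a polynomial, substituting the derivative operator for $x$ yields the factorization $r(D)=\prod_{i=1}^d(I-\alpha_i D)$, where $I$ is the identity and the factors commute (they are all polynomials in $D$). Accordingly I would set $p_0=p$ and $p_k=(I-\alpha_k D)\,p_{k-1}=p_{k-1}-\alpha_k p_{k-1}'$ for $k=1,\dots,d$, so that $p_d=r(D)p$, and it then suffices to follow the discriminant along this chain.

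The next step is to record the (classical) fact that the class of real-rooted polynomials with simple roots is preserved by each step $q\mapsto q-\alpha q'$ when $\alpha\neq 0$: writing $q-\alpha q'=-\alpha\, e^{x/\alpha}\frac{d}{dx}\!\left(e^{-x/\alpha}q(x)\right)$, Rolle's theorem produces a zero of the bracketed derivative strictly between each pair of consecutive roots of $q$, and an analysis of the sign of $e^{-x/\alpha}q(x)$ near $+\infty$ (if $\alpha>0$) or near $-\infty$ (if $\alpha<0$) produces one further critical point; since $q-\alpha q'$ has degree exactly $\deg q$, these are all of its roots, hence all real and simple. (This invariance is in any case implicit already in Theorem~\ref{basecase}, whose conclusion $\Dis(p-\alpha p')>0$ presupposes that $p-\alpha p'$ is real-rooted with simple roots.) A trivial induction on $k$ then shows every $p_k$ is real-rooted with simple roots: the steps with $\alpha_k=0$ are the identity, and the steps with $\alpha_k\neq 0$ are covered by the fact just recalled.

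With membership in the right class secured at each stage, Theorem~\ref{basecase} applies to every $p_{k-1}$ and gives $\Dis(p_k)>\Dis(p_{k-1})$ whenever $\alpha_k\neq 0$, and $\Dis(p_k)=\Dis(p_{k-1})$ when $\alpha_k=0$. Chaining these relations from $k=1$ to $k=d$ and using the hypothesis that some $\alpha_i\neq 0$ gives $\Dis(r(D)p)=\Dis(p_d)>\Dis(p_0)=\Dis(p)$, which is the claim. I expect the only genuinely delicate point to be the bookkeeping of the previous paragraph — verifying that every intermediate polynomial $p_k$ really lies in the class for which Theorem~\ref{basecase} is stated, so that the theorem may legitimately be invoked at each step; once that is in place, the corollary is a one-line telescoping argument requiring no new estimate beyond the base case.
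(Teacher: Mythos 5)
Your proof is correct, and it is the natural argument the paper leaves implicit (the corollary is stated without proof immediately after Theorem~\ref{basecase}). Factoring $r(D)=\prod_{i=1}^d(I-\alpha_i D)$ into commuting first-order steps, checking via the $e^{-x/\alpha}$ Rolle trick that each step $q\mapsto q-\alpha q'$ preserves monicity, real-rootedness and simplicity of roots (so that Theorem~\ref{basecase} is legitimately applicable at every stage), and telescoping the strict inequality over the steps with $\alpha_i\neq 0$ is exactly the intended reduction to the base case. The care you take in verifying the class-preservation at each stage is the one point the paper glosses over, and it is the right thing to insist on.
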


Let us investigate how the discriminant is related to probabilistic considerations. \\
Define the set $S_\Lambda=\{Q^t\Lambda Q , Q \in O_d(\mathbb{R}) \}$
where  $ \Lambda=Diag(\lambda_1,....,\lambda_n)$ and  $O_d(\mathbb{R})$ is the group of orthogonal matrices of size $d$. It represents the set of all hermitian matrices whose spectrum is given by the roots of $p$. 
\begin{claim}[Proposition 2.19 from \cite{Zhang}]
The infinitesimal element of volume of  $S_\Lambda$ in $S_d(R)$ is given by 
\[
dV(S_\Lambda)= \prod_{i < j}|\lambda_i-\lambda_j| Vol\big(O_d(\mathbb{R})\big)d\lambda_1...d\lambda_n,
\]
where $Vol$ represents the canonical Lebesgue volume. 
\end{claim}
It justifies the observation that eigenvalues of random matrices tend to repell one another (the likelihood of them being close is small). This result tells us that the size of the discriminant is associated to the size of the infinitesimal volume of matrices with a given spectrum.Therefore we can consider an infinitesimal log-volume of the form $\log \Dis(p)$. It plays the role of an entropy, which is small when there is little randomness that is when the knowledge of the spectrum gives us a lot of information on the underlying symmetric matrix associated to it (matrices with such spectral density form a small ensemble), and which is large when randomness is large, that is on the contrary when the knowledge of the spectrum doesn't give us much information on the matrix(matrices with such spectral density form an ensemble of large infinitesimal volume). It motivates the following definition:
\begin{defn}
For a polynomial of degree $d$, $p:= \prod_{i=1}^d (x- \lambda_i)$, we define its entropy as:
\[
H(p):= \frac{\log[\Dis(p)]}{d(d-1)} = \frac{1}{\binom{d}{2}}\sum_{i,j=1, i<j }^{d}log|\lambda_i-\lambda_j|.
\]
It is by definition $-\infty$ when the polynomial has a multiple root.
\end{defn}

\begin{rmk}
This definition can also be deduced by analogy with free probability. Voiculescu in \cite{Voi} defines the free entropy of a probability measure $\mu$ as:
\[
H(\mu):= \int \int log|s-t| d\mu(s)d\mu(t).
\]
If we consider this averaged logarithm of differences for discrete Dirac measures uniformly distributed on roots of a polynomial ($ \mu_p= \frac{1}{d}\sum_{i=1}^d \delta_{\lambda_i} $)  and that we set aside couples $\{s,t\}$ such that $s=t$ ($log|s-t|$ is not well defined), we retrieve our discrete finite free polynomial entropy. 
\end{rmk}

\subsection{Finite free probability and independent sum for polynomials}
The theory of finite free probability was developed to establish analogues in finite dimensions of tools introduced by Voiculescu in his study of asymptotic random matrices(\cite{Sp}) . 
The following  convolution is a polynomial version of the additive convolution in free probability.
\begin{defn}[See Theorem~2.11 in \cite{MSSffp}]
Consider two  polynomials of degree $d$,  $p= \prod_{i=1}^d (x - \lambda_i)$ and\\
 $q= \prod_{i=1}^d (x - \mu_i)$. Then we define:
\[
p\recsum_d q :=\frac{1}{d!} \sum_{\pi \in \Sigma_d} \prod (x - \lambda_i - \mu_{\pi(i)}),
\]
where  $\Sigma_d$ is the group of permutations of $\{1,\dots,d\}$. It is a degree $d$ polynomial as well. 
\end{defn}

We also express the convolution as a sum of products of derivatives.
\begin{thm} [see Definition~1.1 and Theorem~2.11 in \cite {MSSffp}]\label{diff} We can alternatively write:
\[
p\recsum_d q(x)=  \frac{1}{d!}\sum_{k=0}^d p^{(k)}(x)q^{(d-k)}(0)=\frac{1}{d!}  \sum_{k=0}^d q^{(k)}(x)p^{(d-k)}(0)=\sum_{k=0}^d p^{(k)}(x)(-1)^k\frac{1}{k!} {\sigma_k(q)}
\]
where $p^{(k)}(x)$ is the $k^{th}$ derivative and
\[
 {\sigma_k(q)}= \frac{1}{\binom{d}{k}}\sum_{i_1<i_2...<i_k} \mu_{i_1}\mu_{i_2}\dots \mu_{i_k}.
\]
Through this  formula we can extend the definition to polynomials of lower degree than $d$. 
\end{thm}

\begin{cor}\label{commutederiv}
the $\recsum_d$ operation is symmetric and bilinear, and commutes with derivation.
More explicitly, we have:
\[
D[p\recsum_d q ]= [Dp] \recsum_dq= p \recsum_d [Dq].
\]
As a consequence, for any polynomial $R(D)$ in the differential operator, we have:
\[
R(D)[p\recsum_d q]=  [R(D)p] \recsum_dq= p \recsum_d [R(D)q].
\]
\end{cor}

In particular we have that if  $q(x)=x^d- d\alpha x^{d-1}$,
\[ 
p \recsum_d q= p-\alpha p',
\]
as   $ {\sigma_1(q)}=\alpha$ and all the other symmetric sums are zero; and we retrieve the base derivative case. 

The most remarkable nontrivial property is that this operation preserves realrootedness.
\begin{thm}[see Theorem~1.3 in \cite {MSSffp}]\label{realroot}
If $p$ and $q$ are real-rooted then so is $p\recsum_d q$. 
\end{thm}

This operation is far from being exotic or anecdotic, as the following lemma shows (it was stated in \cite{LR} without the necessary assumption of commutation with the derivation operator).
\begin{lemma}\label{diffcomm}
A linear operator $T: \mathbb{R}^{\leq d} [x] \rightarrow \mathbb{R}^{\leq d}[x] $ ,where $R[x]^{\leq d}$ is the real vector space of
polynomials of degree at most d, is a differential operator which preserves real-rootedness and commutes with the derivation operator if and only if it can be written in the form $T(p)=p \recsum_d q$ for some $q \in \mathbb{R}^{\leq d}[x]$. 
\end{lemma}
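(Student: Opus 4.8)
The plan is to read ``differential operator'' as a constant-coefficient linear differential operator $T=\sum_{k\ge0}c_kD^k$ on $\mathbb{R}^{\le d}[x]$ (with $D=d/dx$; this is the class that the representation $T(p)=p\recsum_dq$ produces, and some such caution is needed, since e.g.\ the polar derivative $p\mapsto dp-(x-r)p'$ has variable coefficients, preserves real-rootedness, but is not of the form $p\mapsto p\recsum_dq$), to observe via the convolution formula of the preceding Lemma that $q\mapsto(p\mapsto p\recsum_dq)$ is merely a reparametrization of all such operators by the single polynomial $q=T(x^d)$, and then to extract both directions. One also sees that the representing $q$ is \emph{forced} to be real-rooted --- so ``$q\in\mathbb{R}^{\le d}[x]$'' in the statement should be read as ``$q$ real-rooted'', e.g.\ for $d=2$, $q=x^2+1$ gives $T_q:p\mapsto p+\tfrac12p''$, which maps $x^2$ to $x^2+1$ and so does not preserve real-rootedness.

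\textbf{Step 1 (parametrization).} Since $D^k$ annihilates $\mathbb{R}^{\le d}[x]$ for $k>d$, any differential operator on $\mathbb{R}^{\le d}[x]$ has the form $T=\sum_{k=0}^dc_kD^k$, and $T(x^d)=\sum_{k=0}^dc_k\frac{d!}{(d-k)!}x^{d-k}$; the coefficients $\frac{d!}{(d-k)!}$ being nonzero, the linear map $T\mapsto T(x^d)$ from this $(d+1)$-dimensional space to $\mathbb{R}^{\le d}[x]$ is injective, hence bijective. By the preceding Lemma, $T_q:=(p\mapsto p\recsum_dq)=\sum_{k=0}^d\frac{(-1)^k}{k!}\sigma_k(q)D^k$ is again such an operator (for every $q\in\mathbb{R}^{\le d}[x]$, linearly in $q$, extending $\sigma_k$ by the coefficient formula), and since $\sigma_0(x^d)=1$ and $\sigma_k(x^d)=0$ for $k\ge1$ that same formula together with commutativity of $\recsum_d$ gives $T_q(x^d)=x^d\recsum_dq=q\recsum_dx^d=q$. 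Thus $q\mapsto T_q$ inverts $T\mapsto T(x^d)$: every differential operator $T$ equals $T_q$ for the unique $q=T(x^d)$.

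\textbf{Step 2 (the two implications).} For ($\Rightarrow$): if $T$ is a differential operator preserving real-rootedness, then $T=T_q$ with $q=T(x^d)$, and since $x^d$ is real-rooted, so is $q=T(x^d)$; thus $T(p)=p\recsum_dq$ with $q$ real-rooted. For ($\Leftarrow$): let $q\in\mathbb{R}^{\le d}[x]$ be real-rooted. By the Lemma $T_q$ is a differential operator, so it suffices to show $p\recsum_dq$ is real-rooted whenever $p$ is. If $p,q$ are both monic of degree $d$, this is the real-rootedness-preservation property of the finite free additive convolution (see below). If $\deg q=d'<d$, write $q=\lambda\hat q$ with $\hat q$ monic of degree $d'$; a short computation from the convolution formula shows $T_q(p)=c\,\big((D^{d-d'}p)\recsum_{d'}\hat q\big)$ for some $c\ne0$, which is real-rooted because differentiation preserves real-rootedness and $D^{d-d'}p$, $\hat q$ are real-rooted of degree $\le d'$ --- reducing this case to the previous one.

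\textbf{Main obstacle.} Every step above is formal except the input used in ($\Leftarrow$): that $p\recsum_dq$ is real-rooted for monic real-rooted $p,q$ of degree $d$. I would establish it either (i) classically, via Walsh's coincidence theorem / the Grace--Walsh--Szeg\H{o} theorem --- with $q(x)=\prod_i(x-b_i)$, obtain $p\recsum_dq$ by applying to $p$ a normalized polar-derivative step for each root $b_i$, using GWS to control the roots at each stage; or (ii) via the interlacing-families method of Marcus--Spielman--Srivastava --- pick real symmetric $A,B$ with characteristic polynomials $p,q$, note $p\recsum_dq=\mathbb{E}_U\,\chi[A+UBU^{\ast}]$ over Haar-random orthogonal $U$ (each summand real-rooted as $A+UBU^{\ast}$ is symmetric), and prove the expectation remains real-rooted. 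This convolution fact is the one genuinely nontrivial ingredient; granting it, the lemma follows immediately from Step 1 and Step 2.
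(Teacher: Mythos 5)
The paper does not prove this lemma at all --- it is quoted verbatim from Leake--Ryder (\cite{LR}, Proposition 1.2) as a known result --- so there is no internal proof to compare against; judged on its own, your argument is sound and supplies what the paper omits. The parametrization step is correct: the operators $\sum_{k=0}^d c_k D^k$ on $\mathbb{R}^{\leq d}[x]$ form a $(d+1)$-dimensional space, $T \mapsto T(x^d)$ is a linear bijection onto $\mathbb{R}^{\leq d}[x]$, and since $x^d$ is the identity for $\recsum_d$ one indeed gets $T_q(x^d)=q$, so every such $T$ is $T_{T(x^d)}$. Your two caveats are both genuine and worth flagging: ``differential operator'' must mean constant coefficients (your polar-derivative example $p\mapsto dp-(x-r)p'$ really does preserve real-rootedness, by the inversion $y=1/(x-r)$ reducing it to an ordinary derivative, yet is not of the form $p\recsum_d q$), and the representing $q$ must be real-rooted for the ``if'' direction (your $q=x^2+1$ counterexample is correct), so the paper's phrasing ``for some $q\in\mathbb{R}^{\leq d}[x]$'' is loose. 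Your degree-deficient reduction $T_q(p)=c\,\bigl((D^{d-d'}p)\recsum_{d'}\hat q\bigr)$ checks out coefficient by coefficient. The one ingredient you do not prove --- that $\recsum_d$ of two real-rooted polynomials is real-rooted --- is the real content of the ``if'' direction; deferring it to \cite{MSSffp} (or to a Grace--Walsh--Szeg\H{o} argument) is legitimate in this context, since that theorem is exactly the kind of external input the paper itself leans on, but be aware that your write-up is a reduction of the lemma to that theorem rather than a self-contained proof.
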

\begin{proof} 
The forward direction follows from Theorem~\ref{realroot} for realrootedness and from Theorem~\ref{diff} for commutation with the derivation operator.  For the converse, notice following  Remark~1.15 in \cite{EugeneSo} that a differential operator preserving degree less than $d$ polynomials can always be written as: $T(p)= \sum_{k=1}^d Q_k(x) D^kp$, for polynomials $Q_k$ of degree less than $k$. Commutation with derivation forces (Theorem~5.2 in \cite{EugeneSo}) $Q_k(x)=Q_k(0)$, and we retrieve the polynomial $q$ such that $T(.)=q \recsum_d $ as the only polynomial verifying $\frac{q^{(d-k)}(0)}{d!}=Q_k(0)$ for all $k=0\dots d$. 
\end{proof}

\subsection{Monotonicity of entropy: general result}
We need to introduce at this point the idea of uniform root-dilation (or contraction). For a polynomial $q(x)= \Pi_{i=1}^d\big (x-\lambda_i(q)\big) $, define the root-dilation:
\[
q_t(x)=q(t,x)= \prod_{i=1}^d \big( x- t\lambda_i(q)\big).
\]
Notice that $q_0(x)= x^d$ and $q_1(x)=q(x)$. So we interpolate between the zero polynomial and the original polynomial, using some linear parameter. 
We will prove in Section~3 the following general result (natural generalization of Theorem~\ref{basecase}), which will rely heavily on interlacing tools developed in Section~\ref{2}:
\begin{thm}\label{generalcase}
$\Dis(q_t\recsum_d p)$ is nonicreasing for $t<0$ and nondecreasing for $t>0$. In particular, it has a global minimum at zero.  
For all real-rooted polynomial $q$ of degree $d$ not equal to $(x-c)^d$ (for $c \in \mathbb{R}$), $\Dis(q_t\recsum_d p)$  is in fact decreasing for $t$ close to zero and $t<0$, and increasing  for $t$ close to zero and $t>0$. In particular we get (for $t=1$):
\begin{align*}
\Dis(q \recsum_d p) &>\Dis(p)   &or& &H(q \recsum_d p) &> H(p).
\end{align*}
\end{thm}
\begin{rmk}
We have in the trivial case that
\[
(x-c)^d\recsum_d p= p(x-c),
\]
therefore:
\[
H((x-c)^d\recsum_d p)=H(p).
\]
Also, as roles of $p$ and q are symmetric, if both have distinct roots, we have more generally:
\[
H(q \recsum_d p) > max \big(H(p),H(q)\big).
\]
\end{rmk}

 So doing this additive finite convolution makes the distribution of the roots more "random", as they are more well-spread and less localized. It creates some sort of mixing of the roots. 
As a result, we prove that applying any differential operator which preserves real-rootedness (a necessity to even be able to define the discriminant as a real quantity).  commutes with derivation (Lemma~\ref{diffcomm}), and doesn't decrease the degree (the condition on the polynomial $q$ being that it is of degree $d$), always increases the discriminant - i.e. the average spacing of the roots- which amounts to a sort of smoothness property. 
It corresponds to the known fact from classical probability  that summing independent variables always increases Shannon entropy (or differential entropy). 

On the way (Section~2) we prove new results on polar derivatives (see Definition~\ref{polarderiv}). More specifically we deduce the following (see Corollary~\ref{laguerreimprov}).
\begin{prop}[improved Laguerre's inequality] \label{laguerreprop}
For any real-rooted polynomial $q$ of degree $d$, we have:
\[
(d-1)q'^2(x)- dqq''(x)\geq 0
\]
 for all real $x$ and the inequality is strict when the roots of $q$ are simple. 
\end{prop}

In Section~4, we give an overview of the validity of the entropy perspective (or analogy) in the framework of finite free probability, and we relate it to recent remarks from Terrence Tao(\cite{Tao}).

%%\begin{prop}
%%The entropy is a concave function of its variables, more precisely its hessian is semidefinite negative of rank $d-1$.
%%\end{prop}

%%We can compute the hessian of h straightforwardly: $\nabla^2[h]_{i,i}=-\sum_{j \neq i}\frac{1}{(\lambda_i-\lambda_j)^2}$, and for $i\neq j$, $\nabla^2[h]_{i,j}=\frac{1}{(\lambda_i-\lambda_j)^2}$. We notice that  $|\nabla^2[h]_{i,i}| = \sum_{j\neq i}  |\nabla^2[h]_{i,j}|$, so the matrix is diagonally dominant. It follows from Gershgorin's theorem (the diagonal being negative) that it is semidefinite negative. To see that it is of rank $d-1$, we can look at the upper corner of size $d-1 \cross d-1$ and notice that it is strictly diagonally dominant in the sense that $|\nabla^2[h]_{i,i}| > \sum_{j\neq i}  |\nabla^2[h]_{i,j}|$ for all $i$, and therefore of rank $d-1$. Now, Cauchy interlacing theorem tells us that the eigenvalues of the hessian interlace the eigenvalues of the upper corner, giving us $d-1$ strictly negative eigenvalues. As the vector $(1,1,...,1)$ is trivially in the kernel, we get that the rank is exactly $d-1$.\\

\section{Interlacing tools and polar derivative}\label{2}
We say that two polynomials $p$ and $q$ interlace when their roots alternate, that is if $p= \prod_{i=1}^d (x - \lambda_i)$ and
 $q= \prod_{i=1}^d (x - \mu_i)$, then $\lambda_1\geq \mu_1 \geq  \lambda_2 \geq \mu_2....\geq \lambda_d\geq \mu_d$ or the other way around. We say that the interlacing is strict when inequalities are all strict. 

The main tool to analyze interlacing of polynomials is the Wronskian.
It is defined as:
\[
W[p,q]:=p'q-q'p.
\]
The following easy properties can be found at Section~2.3 in \cite{Wagner}.
\begin{lemma}\label{signw}
If the roots of $p$ and $q$ interlace, then the Wronskian is of constant sign: $W[p,q]  \geq 0 $  or $W[p,q] \leq 0$. Moreover if the interlacing is strict then inequalities become strict. 
\end{lemma}
One of the most simple inequalities that can be interpreted as the positivity of a Wronskian was discovered by Laguerre.
\begin{lemma}\label{Laguerre} (Laguerre's inequality)
If p has simple roots, then we have a strict interlacing between the first and second derivative; more precisely, for all $x \in \mathbb{R}$,
\[
p'^2-p''p=  W[p,p'] (x)>0.
\]
\end{lemma}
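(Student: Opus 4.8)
The plan is to recognize the statement as Laguerre's classical inequality $(q')^2 - q q'' \ge 0$ for a real-rooted polynomial $q$, applied here to $q := p'$, and to prove exactly this while tracking where simplicity of the roots forces the strict inequality.

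First I would record the structural facts. Since $p$ is monic of degree $d$ with $d$ distinct real roots (and we may assume $d\ge 2$, the case $d=1$ being degenerate as then $p''\equiv 0$), Rolle's theorem places exactly one root of $p'$ strictly between each pair of consecutive roots of $p$, so $p'$ is, up to its leading constant, a real-rooted polynomial of degree $d-1\ge 1$ with simple roots. Unwinding the definition of the Wronskian, $W[p',p''] = (p')'\,p'' - (p'')'\,p' = (p'')^2 - p'\,p'''$; writing $q := p'$ this is precisely $(q')^2 - q q''$, so it suffices to prove $(q')^2 - q q'' > 0$ for every real-rooted polynomial $q$ with simple roots and $\deg q\ge 1$.

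The main computation is the logarithmic-derivative identity. Write $q(x)=c\prod_{k=1}^{m}(x-\mu_k)$ with $m=\deg q\ge 1$ and the $\mu_k$ pairwise distinct. For $x\notin\{\mu_1,\dots,\mu_m\}$ we have $q'/q=\sum_{k}(x-\mu_k)^{-1}$, and differentiating,
\[
\frac{q\,q''-(q')^2}{q^2}=\Big(\frac{q'}{q}\Big)'=-\sum_{k=1}^m\frac{1}{(x-\mu_k)^2},
\]
so that $(q')^2 - q q'' = q(x)^2\sum_{k=1}^m (x-\mu_k)^{-2}$, which is strictly positive, being the product of $q(x)^2>0$ with a nonempty sum of strictly positive terms. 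For the remaining finitely many points $x=\mu_j$ one evaluates directly: $q(\mu_j)=0$ while $q'(\mu_j)\ne 0$ because $\mu_j$ is a \emph{simple} root, so $(q')^2 - q q''$ at $x=\mu_j$ equals $q'(\mu_j)^2>0$. Combining the two cases gives $(q')^2 - q q''>0$ for all real $x$, i.e. $W[p',p''](x)>0$.

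I do not anticipate a genuine obstacle; the only point needing care is that the clean identity $(q')^2 - q q'' = q^2\sum_k(x-\mu_k)^{-2}$ is valid only off the zero set of $q=p'$, so those points must be handled separately — which is exactly where simplicity of the roots of $p$ (hence of $p'$) enters, and is also why the conclusion is strict rather than merely $\ge 0$. As a softer alternative one could note that the roots of $p'$ and $p''$ strictly interlace by Rolle, so $W[p',p'']$ has constant sign by the interlacing lemma, and since its leading coefficient is positive it is eventually positive, hence $\ge 0$ everywhere; ruling out real zeros again reduces to the simple-root analysis above. I would present the direct computation as the main proof.
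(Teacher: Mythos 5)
Your proof is correct. Note, however, that the paper does not prove this lemma at all: it is quoted as a known fact ("Laguerre's inequality") with a pointer to Wagner's survey, so there is no internal argument to compare against. What you supply is the standard self-contained derivation: writing $W[p',p'']=(p'')^2-p'p'''=(q')^2-qq''$ with $q=p'$, using $\bigl(q'/q\bigr)'=-\sum_k (x-\mu_k)^{-2}$ off the zero set of $q$, and evaluating directly at the zeros, where $(q')^2-qq''=q'(\mu_j)^2>0$ precisely because the roots of $q=p'$ are simple. You correctly trace the simplicity of the roots of $p'$ back to the simplicity of the roots of $p$ via Rolle, which is exactly the hypothesis the paper needs, and you correctly flag the implicit assumption $\deg p\ge 2$ (for $d=1$ the Wronskian vanishes identically, but then the whole discriminant discussion is vacuous). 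Your softer alternative (strict interlacing of $p'$ and $p''$ plus constant sign of the Wronskian and positivity of its leading term) only yields $W[p',p'']\ge 0$ without the pointwise computation, as you note, so presenting the direct identity as the main proof is the right choice; it is also the strict positivity that the paper actually uses in the proofs of Theorems 1.1 and 1.2.
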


 The following theorem can also be found at Section~2.3 in \cite{Wagner}. It shows that interlacing is equivalent to real-rootedness stability with respect to linear combinations. 
\begin{thm}(Hermite-Kakeya-Obreschkoff) \label{HKO}
Let $f$ and $g$ be real polynomials.Then the two following statement are equivalent:\\
1) $f$ and $g$ are real-rooted and their roots interlace (in particular $W[f,g]$ is of constant sign),\\
2) $af+bg$ is real-rooted for all $a,b \in \mathbb{R}$.\\
Moreover, strict interlacing is equivalent to saying  that the roots of $af+bg$ for all $a,b$ are simple. 
\end{thm}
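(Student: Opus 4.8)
Since real-rootedness is invariant under multiplication by a nonzero scalar, condition (2) is equivalent to the assertion that $f+tg$ is real-rooted for every $t\in\mathbb{R}$ (the cases in which $f$ or $g$ is multiplied by $0$ being trivial). The plan is to transfer both implications to the rational function $R=g/f$: the roots of $f+tg$ are exactly the solutions of $R(x)=-1/t$, and $R'=-W[f,g]/f^2$, so $\sign W[f,g]$ controls the monotonicity of $R$.

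\textbf{(1) $\Rightarrow$ (2).} Assume $f,g$ are real-rooted with interlacing roots. First perturb: a weakly interlacing pair is a limit of simple-rooted, \emph{strictly} interlacing (hence automatically coprime) pairs $f_\varepsilon,g_\varepsilon$ with $f_\varepsilon\to f$, $g_\varepsilon\to g$, and once the statement is proved for $f_\varepsilon,g_\varepsilon$, letting $\varepsilon\to0$ gives it for $f,g$ because real-rootedness is a closed condition. So assume $f$ simple-rooted with roots $a_1<\dots<a_n$ and strictly interlacing with $g$. Then $g(a_i)g(a_{i+1})<0$, so for any fixed $t\neq0$ the values $(f+tg)(a_i)=t\,g(a_i)$ alternate in sign and the intermediate value theorem puts a root of $f+tg$ in each of the $n-1$ bounded gaps $(a_i,a_{i+1})$. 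The remaining one or two roots are located by the same sign analysis on the unbounded intervals $(-\infty,a_1)$ and $(a_n,\infty)$, using the leading coefficients of $f$ and $tg$ and the position of the outermost root of $g$; treating the cases $\deg g=n$ and $\deg g=n-1$ (and the lone exceptional $t$ at which the top-degree terms of $f$ and $tg$ cancel) one checks $f+tg$ has at least $\deg(f+tg)$ real roots, hence is real-rooted. (Equivalently: $W[f,g]$ has constant sign and vanishes only at the $a_i$ for such a pair, and $(f+tg)(a_i)=t\,g(a_i)\neq0$, so $f+tg$ never acquires a double root as $t$ varies, hence no root can leave $\mathbb{R}$; the endpoints $t=0$ and $t\to\infty$ being real-rooted closes the argument.)

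\textbf{(2) $\Rightarrow$ (1).} Specializing $(a,b)$ to $(1,0)$ and $(0,1)$ shows $f$ and $g$ are real-rooted. Factor out $d=\gcd(f,g)$ (a divisor of $f$, hence real-rooted, and a divisor of every $f+tg$): it suffices to prove interlacing for the coprime pair $f/d,\,g/d$, the common roots of $f,g$ being shared ``touching'' roots that only relax the interlacing. So assume $f,g$ coprime. Then (2) forces $f$ (and likewise $g$) to be simple-rooted: a root of $f$ of multiplicity $m\geq2$ is a pole of $R$ of order $m$, and the local model $R\approx c(x-a)^{-m}$ has a non-real preimage of $v$ for a suitable sign of $v$, contradicting that $f-g/v$ is real-rooted. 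Next, (2) forces $R$ to be strictly monotone on every interval between consecutive (simple) poles: if $R'(x_0)=0$ at a regular point $x_0$, then $R(x)=v_0+c(x-x_0)^m+\cdots$ with $m\geq2$, and for $v=v_0+\varepsilon$ with $\varepsilon$ small, nonzero (possible since $R\not\equiv0$) and of an appropriate sign, at least two preimages of $v$ near $x_0$ are non-real, again a contradiction. With simple poles $a_1<\dots<a_n$ and strict monotonicity, $R$ runs from $-\infty$ to $+\infty$ (or the reverse) across each bounded gap and so has exactly one zero there, i.e.\ $g$ has exactly one root in each $(a_i,a_{i+1})$; the end behaviour of $R$ (dictated by $\deg g\in\{n-1,n\}$ and the sign of the ratio of leading coefficients) accounts for the last root of $g$ when there is one. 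Thus the roots of $g$ interlace those of $f$; reinstating the common factor $d$ only inserts shared roots and leaves the (now non-strict) interlacing intact, which is (1).

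\textbf{Where the difficulty lies.} The one genuinely delicate point, present in both directions, is the accounting at infinity: the analysis of $R$ (equivalently, of $f+tg$) splits according to whether $\deg g=\deg f$ or $\deg g=\deg f-1$, with a further subtlety at the single exceptional $t$ where the top-degree term of $f+tg$ drops, and it is precisely this bookkeeping that enforces the natural convention for ``interlacing'' --- that the degrees of $f$ and $g$ differ by at most one. A lesser nuisance is the handling of the $\gcd$ and multiplicity reductions and of the borderline value $v=0$ in the extremum argument of $(2)\Rightarrow(1)$.
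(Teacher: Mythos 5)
The paper offers no proof of this statement to compare against: Hermite--Kakeya--Obreschkoff is quoted as a known theorem and delegated to \cite{Wagner}. Judged on its own, your sketch is the standard direct argument and its skeleton is correct: for (1)$\Rightarrow$(2), perturbation to a simple-rooted strictly interlacing pair, the sign alternation of $(f+tg)(a_i)=t\,g(a_i)$, the intermediate value theorem, and closedness of real-rootedness in the limit; for (2)$\Rightarrow$(1), factoring out $\gcd(f,g)$, excluding multiple roots of $f$ and critical points of $R=g/f$ by exhibiting non-real preimages of suitable values $v$ (each such preimage being a non-real root of $g-vf$), and reading off interlacing from the strict monotonicity of $R$ between its simple poles. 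Two points are worth tightening. First, the ``accounting at infinity'' you flag in (1)$\Rightarrow$(2) can be avoided altogether: once $f+tg$, a real polynomial of degree at most $n=\deg f$ (taking $\deg g\le\deg f$ after possibly swapping roles), has $n-1$ real roots from the bounded gaps, any remaining root is automatically real because non-real roots come in conjugate pairs, so no analysis of leading coefficients or of the exceptional $t$ is needed. Second, in (2)$\Rightarrow$(1) you assert $\deg g\in\{n-1,n\}$ without justification; this is the one genuine loose end, but it follows from your own setup: for $v\neq 0$ the equation $R(x)=v$ has at most $n+1$ real solutions ($R$ is strictly monotone on each of the $n+1$ intervals determined by the simple poles, and coprimality rules out solutions at the poles), while $g-vf$ has degree $\deg g$ whenever $\deg g>\deg f$, so $\deg g\ge n+2$ would force $g-vf$ to have a non-real root, contradicting (2); the symmetric argument bounds $\deg f-\deg g$. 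A cosmetic remark: for a coprime strictly interlacing pair $W[f,g]$ in fact vanishes nowhere (at $a_i$ it equals $f'(a_i)g(a_i)\neq0$), which is all your parenthetical no-double-root variant requires. With these details filled in, your sketch is a complete proof of the cited theorem.
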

\begin{cor}\label{interlconv}
The finite free addition (convolution) preserves interlacing, or explicitly: if f and g are degree d polynomials that interlace then $p\recsum_d f $ and $p\recsum_d g$ interlace too.
\end{cor}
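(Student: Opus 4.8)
The plan is to read off the statement from the Hermite--Kakeya--Obreschkoff theorem (Theorem~\ref{HKO}), once we have in hand two structural facts about the finite free convolution: that the map $q \mapsto p \recsum_d q$ is linear, and that it sends real-rooted polynomials to real-rooted polynomials.

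First I would apply the implication $1 \Rightarrow 2$ of Theorem~\ref{HKO} to the interlacing pair $(f,g)$: for every $a,b \in \mathbb{R}$ the pencil $af + bg$ is real-rooted (of degree $d$ when $a+b \neq 0$, and of degree at most $d-1$ when $a+b=0$). Next I would record the bilinearity of $\recsum_d$. The quickest route is the derivative formula of the Lemma above, $p \recsum_d q = \sum_{k=0}^d p^{(k)}(-1)^k \frac{1}{k!}\sigma_k(q)$, together with the observation that $\sigma_k(q)$ is, up to the fixed factor $\binom{d}{k}^{-1}$ and a sign, just the coefficient of $x^{d-k}$ in $q$; hence $\sigma_k$ is a linear functional on $\mathbb{R}^{\leq d}[x]$ and $q \mapsto p \recsum_d q$ is linear. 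This yields $a(p \recsum_d f) + b(p \recsum_d g) = p \recsum_d (af+bg)$ for all $a,b$. Combining this with the first step and with the fact that the finite free convolution of two real-rooted polynomials of degree at most $d$ is again real-rooted (the real-rootedness-preservation property underlying the lemma of \cite{LR}, valid also when the second factor has degree $<d$), we conclude that $a(p \recsum_d f) + b(p \recsum_d g)$ is real-rooted for every $a,b \in \mathbb{R}$.

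Finally I would invoke the reverse implication $2 \Rightarrow 1$ of Theorem~\ref{HKO}, this time for the pair $(p \recsum_d f,\ p \recsum_d g)$: since every real pencil of these two polynomials is real-rooted, the polynomials are themselves real-rooted and their roots interlace; both are monic of degree $d$ because $p$, $f$ and $g$ are, so there is no ambiguity in the interlacing statement. The one point that needs care is the degenerate pencils $b=-a$: there $af+bg$ drops below degree $d$, and one must be sure that convolving $p$ with a real-rooted polynomial of degree $<d$ still yields a real-rooted polynomial. This can be checked directly from the derivative formula, or by a Hurwitz-type limiting argument in which the lower-degree factor is approximated coefficientwise by degree-$d$ real-rooted polynomials (whose convolutions with $p$ are real-rooted by the degree-$d$ case) and real-rootedness is transferred to the limit. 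That degenerate case is the only genuine obstacle; the rest is bilinearity bookkeeping and two applications of Theorem~\ref{HKO}.
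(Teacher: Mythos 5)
Your proof follows the same route as the paper's: establish bilinearity of $q \mapsto p\recsum_d q$, apply the $1\Rightarrow 2$ direction of Theorem~\ref{HKO} to $(f,g)$, use real-rootedness preservation of the convolution to see every pencil $a(p\recsum_d f)+b(p\recsum_d g)=p\recsum_d(af+bg)$ is real-rooted, and close with the $2\Rightarrow 1$ direction of Theorem~\ref{HKO}. The only difference is that you explicitly address the degenerate pencil $a+b=0$ where $af+bg$ drops degree, a point the paper's proof passes over in silence; this is a worthwhile bit of extra care but does not change the argument.
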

\begin{proof}
Take $a,b \in \mathbb{R}$. Assume $f$ and $g$ do interlace. It implies by Theorem \ref{HKO} that $af+bg$ is real rooted. But now, using the real-rootedness preservation (Theorem \ref{HKO}) and the linearity properties of $p\recsum_d $, we get that $p \recsum( af +bg)=a( p\recsum f) + b (p\recsum g)$ is real rooted. As $a$ and $b$ are arbitrary, we conclude using Theorem~\ref{HKO} the other way around that $ p\recsum f$ and $p \recsum g$ do interlace.
\end{proof}

We can prove more precisely the following result stated without proof in \cite{LR} (Lemma 4.1):
\begin{cor} \label{convinterlace}
If $W[f,g] (x)\geq 0$ uniformly in $x$, then also $W[f\recsum_d p, g \recsum_d p](x) \geq 0$ for all $x$. (the convolution operation preserves the monotonicity of the interlacing and not merely the interlacing).
\end{cor}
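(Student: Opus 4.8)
The plan is to use the corollary just proved to pin down everything about $W[f\recsum_d p,\,g\recsum_d p]$ except the \emph{orientation} of the interlacing, and then to read off that orientation from the leading coefficient of the Wronskian, which I claim is left untouched by $\recsum_d p$. First I would dispose of the trivial case $f=g$ (both Wronskians vanish identically) and otherwise normalise $f,g$ to be monic of degree $d$ — rescaling each by a nonzero constant scales both $W[\cdot,\cdot]$ and $\cdot\recsum_d p$ by that constant, so this changes neither hypothesis nor conclusion up to the evident sign bookkeeping. By the previous corollary, $f\recsum_d p$ and $g\recsum_d p$ interlace, hence $W[f\recsum_d p,\,g\recsum_d p]$ has constant sign on $\mathbb{R}$; the whole point is to show that sign is the nonnegative one.

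Next I would exploit the triangular nature of $\recsum_d p$ on coefficients. Relabelling the displayed Lemma, $r\recsum_d p=\sum_{k\geq0}(-1)^k\frac{\sigma_k(p)}{k!}r^{(k)}$; since $\sigma_0(p)=1$ and $r^{(k)}$ has degree $\deg r-k$, the map $T\colon r\mapsto r\recsum_d p$ acts on the coefficient vector (in the basis $x^d,x^{d-1},\dots$) as lower triangular with $1$'s on the diagonal. In particular $T$ is injective, sends monic degree-$d$ polynomials to monic degree-$d$ polynomials, and — since $T(g)-T(f)=T(g-f)$ has the same degree and leading coefficient as $g-f$ — the polynomials $Tf$ and $Tg$ first differ at exactly the same power $x^{d-k}$ ($1\leq k\leq d$) as $f$ and $g$ do, with exactly the same discrepancy $h\neq0$ there.

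Then I would compute the top of the Wronskian. For monic $F,G$ of degree $d$ agreeing in all coefficients above $x^{d-k}$ and differing there by $h$, put $H:=G-F$; then $W[F,G]=F'G-G'F=F'H-H'F$, which has degree $2d-1-k$ and leading coefficient $dh-(d-k)h=kh\neq0$. Applying this to $(F,G)=(f,g)$: as $W[f,g]\geq0$ and is not identically zero (recall $f\neq g$ monic), a nonnegative real polynomial must have even degree and positive leading coefficient, so $2d-1-k$ is even and $kh>0$, i.e.\ $k$ is odd and $h>0$. Applying the identical computation to $(F,G)=(f\recsum_d p,\,g\recsum_d p)$ — legitimate by the previous paragraph, these being monic of degree $d$ and first differing at $x^{d-k}$ with the same discrepancy $h$ — shows $W[f\recsum_d p,\,g\recsum_d p]$ has degree $2d-1-k$ (even) and leading coefficient $kh>0$; hence it tends to $+\infty$ at both ends of $\mathbb{R}$, and being of constant sign it is nonnegative everywhere, as desired.

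The main obstacle — and the reason the $\sqrt t$-type deformations used elsewhere in the paper are unnecessary here — is precisely the boundary behaviour: the constant-sign statement from the interlacing corollary is useless without a single known value or asymptotic of $W[f\recsum_d p,\,g\recsum_d p]$, and everything hinges on showing that the highest surviving coefficient of the Wronskian, $kh$, is the \emph{same} before and after convolving with $p$. So the crux is the bookkeeping in the second and third paragraphs establishing that the ``first differing coefficient'' of $f$ and $g$, and hence the leading coefficient $kh$ of the Wronskian, are genuinely invariant under $\recsum_d p$; once that is in hand the conclusion is immediate. One should also sanity-check the degenerate sub-case $k=d$, where $W[F,G]=hF'$, and confirm it is consistent with the general formula, which it is.
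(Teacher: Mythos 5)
Your proof is correct, and it is worth noting that the paper itself does not prove this corollary at all --- it is cited from [LR] (Lemma 4.1) without argument. So you have supplied a genuinely self-contained proof where the paper leaves a reference.

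The argument is clean and the key idea is a good one: on the coefficient vector (in the basis $x^d, x^{d-1}, \dots$), the map $r \mapsto r \recsum_d p = \sum_k (-1)^k \frac{\sigma_k(p)}{k!} r^{(k)}$ is lower triangular with $1$'s on the diagonal, so it preserves the first index at which two polynomials disagree and the discrepancy $h$ there. Combined with the elementary computation $W[F, F+H] = F'H - H'F$, whose leading coefficient $kh$ depends only on that data, this pins down the asymptotic sign of $W[f\recsum_d p, g\recsum_d p]$ to coincide with that of $W[f,g]$. The previous corollary supplies constancy of sign, and you then only need the leading behaviour. The reduction to monic $f, g$, the handling of $f=g$, and the check of the degenerate sub-case $k=d$ are all correct; the parity observation that a nonzero nonnegative polynomial has even degree and positive leading coefficient does the rest. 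The only thing worth making explicit (to be fully rigorous about applying the preceding corollary) is that the hypothesis is implicitly that $f, g$ are real-rooted of degree $d$ and interlace, with $W[f,g]\geq 0$ specifying the orientation --- otherwise $W[f,g]\geq 0$ alone does not guarantee that the convolutions interlace. With that understood, the proof goes through and is arguably more elementary than arguments relying on continuity or deformation; it also isolates exactly what is invariant under $\recsum_d p$ (the top of the Wronskian), which is a nice structural insight.
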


\begin{proof}
Let us assume that $f$ and $g$ are distinct polynomials so that their Wronskian is non identically zero. The function $w(x,t)=W[f\recsum_d p_t, g \recsum_d p_t](x)$ is continuous and is such that $w(x,0) \geq 0$ and $w(x,0)$ goes to infinity when $x$ goes to infinity as it is a nonzero polynomial. For all $t$ between $0$ and $1$, we have following Corollary~\ref{interlconv} either for all $x$, $w(x,t)\geq 0$ or  for all $x$, $w(x,t)\leq 0$. Furthermore assume that for some $t$, $w(x,t)\equiv 0$, that is the identical zero polynomial. Then we would get $f\recsum_d p_t = g \recsum_d p_t$ identically, and by doing the additive convolution with the inverse polynomial (Corollary 6.2 in \cite{M}) of $p_t$, we would get $f=g$ identically, which is contrary to the assumptions. So necessarily for all $t$,  $w(x,t)$ is not the identical zero polynomial and therefore by continuity in $t$ for $x$ asymptotically large, we see that we can't switch from $w(x,t)<0$ for all large $x$ to $w(x,t)> 0$ or vice versa when moving slightly $t$. Therefore for all $t$, $w(x,t) \geq 0$ for all $x$. 
\end{proof}

Let us next introduce the notion of polar derivative, closely related as we will see below to the root-dilation defined above.

\begin{defn}\label{polarderiv}
If $q$ is a polynomial of degree $d$, we call polar derivative the polynomial of degree less than $d-1$ defined by: $\dot{\partial}_x q= dq-xq'$.
\end{defn}

We prove the following result (we were unable to find any reference, Steve Fisk in \cite{Fisk}, Section 4.2 actually claims it is not true but his example is wrong). 

\begin{thm}[Real-rootedness of the polar derivative and interlacing]\label{polar}
For a real-rooted polynomial $q$ of degree $d$ not equal to $x^d$(trivial case), its polar derivative $\dot{\partial}_x q$ of degree $d-1$ or $d-2$ is always real-rooted and its roots interlace the roots of $q'$. Furthermore, $W[\dot{\partial}_x  q, q'] \geq 0$, i.e. the interlacing is positive between the polar derivative and the derivative, and when the roots of $q$ are simple then  $W[\dot{\partial}_x  q, q'] > 0$. 
\end{thm}
\begin{proof}
Let us assume first that $q$ has $d$ distinct roots and let's assume for convenience that it is monic. \\
Denote by $\alpha_{d-1}\dots<\alpha_{1}$ the $d-1$ distinct roots of $q'$.  The sign of $q$ will strictly change from one root of the derivative to another, and using limits at infinity we get $\sign\big(q(\alpha_i)\big)= (-1)^i$, which, as $\dot{\partial}_x q= dq-xq'$, leads to
\begin{equation}\label{sign}
\sign\big( \dot{\partial}_x q(\alpha_i)\big)= (-1)^i.
\end{equation}
The intermediate value theorem gives us, using the alternance of signs from one root to the other, $d-2$  roots $\beta_i$  of  $\dot{\partial}_x q$  such that  $\alpha_{d-1}< \beta_{d-2}< \alpha_{d-2}\dots< \alpha_{2}<\beta_{1}<\alpha_{1}$, that is strictly interlaced with the $\alpha_i$.
Now let us distinguish according to the degree of $\dot{\partial}_x q$. 
Write, for $q=\prod_{i=1}^d\big(x-\lambda_i(q)\big)$: 
\begin{align*} \label{expansion}
q&= x^d + c_{d-1}x^{d-1}+ c_{d-2}x^{d-2}+...\text{(lower degrees)},  \\
\dot{\partial}_x  q&= c_{d-1}x^{d-1}+2c_{d-2}x^{d-2}+...
\end{align*}
where $c_{d-1}=- \sum_{i=1}^d \lambda_i(q)$. \\
If the sum of the roots is zero ( $c_{d-1}=0$), the degree of $\dot{\partial}_x  q$  is at most $d-2$ (it is in fact exactly $d-2$ if $q \neq x^d$) and we already found the $d-2$ roots $\beta_i$ above.\\
 In case $c_{d-1}>0$, we have $\dot{\partial}_x  q(\alpha_{1})<0$ and $\lim_{x \to +\infty} \dot{\partial}_x  q = +\infty$, so we have again by the intermediate value theorem one more root $\beta$ such that $\alpha_{1}<\beta< +\infty$. \\
 In case $c_{d-1}<0$, we have $sign \big(\dot{\partial}_x  q(\alpha_{d-1})\big)= (-1)^{d-1} $  according to Equation~\ref{sign}, and  $\lim_{x \to -\infty} \dot{\partial}_x  q =  (-1)^{d} \infty$. 
 So we have one more root $\beta$ such that $-\infty<\beta< \alpha_{d-1}$ (intermediate value theorem). \\
 Overall, we found all the roots of $\dot{\partial}_xq$ and they interlace with the roots of $q'$. \\
 The case when $q$ has multiple roots is done by a limiting argument: consider $\big(\lambda_1(\epsilon), \lambda_2(\epsilon), \dots, \lambda_d(\epsilon)\big)$ a sequence of distinct roots converging to $\big(\lambda_1, \lambda_2 \dots, \lambda_d\big)$ when $\epsilon \to 0$. $W[\dot{\partial}_x  q_{\epsilon}, q_{\epsilon}'] $ has constant sign for all $\epsilon>0$ by the previous case (simple roots) using Lemma~\ref{signw}, and the Wronskian is a continuous function of the roots therefore in the limit, the constant sign of the Wronskian is preserved and therefore interlacing too (and real-rootedness of $\dot{\partial}_x q$). \\
 Let us prove that in fact $W[\dot{\partial}_x  q, q'] \geq 0$. 
 We already know  that the Wronskian has a constant sign, it is therefore sufficient to prove nonnegativity, and nonnegativity for large $x$ is also sufficient (the sign being constant). We only need to isolate the leading term in $W[\dot{\partial}_x  q,q'](x)$ and show that it is positive. 

After a few straightforward computations we get:
\[
W[\dot{\partial}_x  q,q']= \dot{\partial}_x  q' q'- \dot{\partial}_x  q q''= x^{2(d-2)}\big(c_{d-1}^2(d-1) -2dc_{d-2}\big)+....
\]
Using the fact that $c_{d-1}= -\sum_{i=1}^d \lambda_i(q)$ and  $c_{d-2}=\sum_{i<j} \lambda_i(q)\lambda_j(q)  $ we get that:
\[
c_{d-1}^2(d-1) -2dc_{d-2}= \sum_{i \neq j} (\lambda_i(q)- \lambda_j(q))^2
\]
The positivity is then clear if all the roots are not equal to some constant $c$. If they do, then $W[\dot{\partial}_x  q,q']=0$ identically, and the inequality is also trivially verified.

\end{proof}
We can now prove Proposition~\ref{laguerreprop}.
\begin{cor}[improved Laguerre's inequality]\label{laguerreimprov}
For any real-rooted polynomial $q$ of degree $d$, we have:
\[
(d-1)q'^2(x)- dqq''(x)\geq 0
\]
 for all real $x$ and the inequality is strict when the roots of $q$ are simple. 

\end{cor}
\begin{proof}
It follows from $W[\dot{\partial}_x  q, q'] =(d-1)q'^2- dqq''$. 
\end{proof}

\begin{cor}\label{derivt}
We have:  $\partial_t q_t(x)= \frac{1}{t}\dot{\partial}_x  q_t$. Consequently, for any real-rooted polynomial $q$, $\partial_t q_t(x)$ and $\partial_x q_t$ interlace for all $t \geq 0$. In particular, we prove that  $\partial_t q_t(x)$ is always a real-rooted polynomial as well. 
\end{cor}
\begin{proof}
We recall that $q_t(x)= \prod_{i=1}^d \big(x-t\lambda_i(q)\big)$. We compute easily
\[
\partial_t q_t(x)= \sum_{i=1}^d -\lambda_i(q) \frac{q_t(x)}{x-t\lambda_i(q)}.
\]
On the other hand we have:
\[
\partial_xq_t(x)=  \sum_{i=1}^d \frac{q_t(x)}{x-t\lambda_i(q)}.
\]
So we retrieve the homogeneous polynomial's Euler formula \big($q(t,x)$ is homogeneous in two variables\big):
\[
t\partial_t q_t(x) +x \partial_xq_t(x)= d q_t(x).
\]
Or isolating the scaled polar derivative,
\[
\partial_t q_t(x)= \frac{1}{t}\Big(dq_t(x)-x \partial_xq_t(x)\Big) = \frac{1}{t}\dot{\partial}_x  q_t. 
\]
It then follows from Theorem~\ref{polar} (if $t>0$, the case $t=0$ is trivial as all the roots are at zero).

\end{proof}

\section{Proof of the inequalities}
We will in all the following denote by $r_t(x)= p\recsum_d q_t(x)$ and drop the index $d$ in $\recsum_d$.
We will denote by $\lambda_i(t)$ the roots of $r_t$, according to the context.
\begin{lemma}
\[
\frac{d\lambda_i(t)}{dt}= - \frac{\partial_t r_t}{ \partial_xr_t} \big(\lambda_i(t)\big)
\]
\end{lemma}
\begin{proof}
We can easily prove using the implicit function theorem that the roots are differentiable functions of $t$.
We have the following equality for all time: $ r_t \big( \lambda_i(t)\big) =0$. Therefore,
derivating with respect to $t$ we get
\[
\frac{d\lambda_i(t)}{dt} \partial_xr_t \big(\lambda_i(t) \big)+ \partial_t r_t \big(\lambda_i(t) \big)=0
\]
\end{proof}

\subsection{The base derivative case}
This section is devoted to proving Theorem \ref{basecase}.

\begin{proof}
Let
$q(x):=x^d-\alpha d x^{d-1}$ and: 
\[ r_t(x):=p \recsum q_t= p-t \alpha p'.
\]
We can re-write:
\[
\Dis(r_t)^2= \prod_{i=1}^d\Big(\partial_xr_t\big(\lambda_i(t)\big) \Big)^2.
\]
Now, it is sufficient to show that $f_i(t):=\Big(\partial_xr\big(\lambda_i(t)\big) \Big)^2$ is monotonous  in $t$ for all $i$. We square all the quantities not to bother about the sign but to consider only the absolute value increments.\\
We have:
\[
\frac{df_i(t)}{dt}= 2\partial_xr_t\big(\lambda_i(t)\big) \Big( \frac{d\lambda_i(t)}{dt} \partial_{xx}r_t\big(\lambda_i(t)\big) + \partial_t \partial_xr_t\big(\lambda_i(t) \big) \Big).
\]
On the other hand:
\begin{align}
\partial_t r_t(x)&= -\alpha p'  &  &\text{and}&  \frac{d\lambda_i(t)}{dt}&= -\frac{\partial_t r_t}{\partial_xr_t}\big(\lambda_i(t)\big).
\end{align}

So that:
\begin{align}
\frac{df_i(t)}{dt}&= 2\Big(-\partial_t r_t \partial_{xx}r_t +\partial_t \partial_xr_t\partial_xr_t\Big)\big(\lambda_i(t)\big)\\
&= 2\Big(\alpha p'\big(p''-t\alpha p'''\big) - \alpha p''\big(  p'-t\alpha p''\big)\Big) \big(\lambda_i(t)\big)\\
&= 2t\alpha^2 \Big(p''^2- p'p''' \Big) \big(\lambda_i(t)\big)\\
&=2 t\sigma_1(q)^2 W[p',p''] \big(\lambda_i(t)\big) .
\end{align}
If $t>0$, Lemma~\ref{Laguerre} leads to $\frac{df_i(t)}{dt}>0$ for all $i$ and therefore $\Dis(r_t)^2$ as well as $\Dis(r_t)$ are increasing. If $t$ is negative, they are all decreasing. 
\end{proof}

\subsection{General case for finite free addition}

We now move on to proving Theorem~\ref{generalcase}.
\begin{proof}
We will prove first that for $t>0$, $ \frac{\partial}{\partial t} Dis(r_t(x)) \geq 0$, and $t<0$, $ \frac{\partial}{\partial t} Dis(r_t(x)) \leq 0$. Then, we will prove that for $t$ close to zero the inequalities are strict. The result will immediately follow. We recall that
\[
\Dis(r_t)^2= \prod_{i=1}^d\Big(\partial_xr_t\big(\lambda_i(t)\big) \Big)^2.
\]
Again we will prove that each factor in the former expression does indeed evolve monotonously.
 As in the "base case", let us call $f_i(t):= \partial_xr_t\big(\lambda_i(t)\big)^2$. We have:
 \begin{align*}
 \frac{d f_i(t)}{dt}&=  2\partial_xr_t\big(\lambda_i(t)\big) \Big(\frac{d \lambda_i(t)}{dt} \partial_{xx}r_t\big(\lambda_i(t)\big)+ \partial_t\partial_xr_t\big(\lambda_i(t)\big)\Big)\\
& = 2\Big(-\partial_t r_t \partial_{xx}r_t +\partial_t \partial_xr_t\partial_xr_t\Big)\big(\lambda_i(t)\big)\\
 &= 2W [ \partial_t r_t, \partial_xr_t] \big(\lambda_i(t)\big)\\
 &=  2W\big[  \frac{1}{t}\dot{\partial}_xq_t \recsum_d p , \partial_xq_t\recsum_d p\big] \big(\lambda_i(t)\big),\\
 \end{align*}
 where the last equality follows from the fact that :
 \begin{align*}
  &\partial_t (q_t \recsum_d p)=  \frac{1}{t}\dot{\partial}_xq_t \recsum_d p   \text{ (see Corollary~\ref{derivt})}&  &and &(q_t \recsum_d p)'= q_t' \recsum_d p& \text{ (see Corollary~\ref{commutederiv})}.
  \end{align*}
  Using Theorem~\ref{polar} we get that $W [\dot{\partial}_xq_t  , \partial_xq_t] \big(\lambda_i(t)\big)  \geq 0 $, and using Corollary~\ref{convinterlace}, we get that
 \[
 W [\dot{\partial}_xq_t\recsum_d p, \partial_xq_t\recsum_d p] \big(\lambda_i(t)\big)\geq 0.
 \]
  For $t>0$, we get $ \frac{d f_i(t)}{dt} \geq 0$ and for $t<0$, we get $\frac{d f_i(t)}{dt} \leq 0$.
 What was needed to prove that it is nondecreasing or nonincreasing according to $t$. We will now prove that inequalities are strict for $t$ close to zero.
  To this purpose, we can compute an asymptotic expansion  of $W[ \partial_t q_t \recsum_d p, \partial_xq_t\recsum_d p] $ in $t$. First, using Theorem~\ref{diff} we get that:
  \[
  q_t\recsum_d p= \sum_{k=0}^d p^{(k)}(-1)^k\frac{1}{k!} t^{k} \sigma_k(q).
  \]
  Therefore:
  \begin{align*}
  \partial_t (q_t \recsum_d p)&= - p'\sigma_1(q) +tp''\sigma_2(q) + o(t)   &\text{and}&&   \partial_x (q_t \recsum_d p)&= p'-t p''\sigma_1(q)+ o(t).
  \end{align*}
  
  We then find that $W[ \partial_tr_t, \partial_xr_t] (x)$ is equal to:
   \begin{align*}
   &\big(- p''\sigma_1(q) +tp'''\sigma_2(q)+o(t) \big)\big( p'-t p''\sigma_1(q)+o(t)\big) - \big( p''-t p'''\sigma_1(q)+o(t)\big)  \big(- p'\sigma_1(q) +tp''\sigma_2(q)+o(t)\big)\\
   =& t\big(p'p'''\sigma_2(q)+ p''^2\sigma_1(q)^2- p'''p'\sigma_1(q)^2-p''^2\sigma_2(q) \big) +o(t)\\
   =&t\big(\sigma_1(q)^2-\sigma_2(q)\big)W[p',p''] (x)+ o(t).
   \end{align*}
 A simple computation gives:
 \begin{align*}
  \sigma_1(q)^2- \sigma_2(q) &= \frac{\big(\sum_{i=1}^d{\lambda_i(q)}\big)^2}{d^2}- \frac{\sum_{i<j}2\lambda_i(q)\lambda_j(q)}{d(d-1)}\\
  &= \frac{\big(\sum_{i=1}^d{\lambda_i(q)}\big)^2(d-1) -d \big((\sum_{i=1}^d{\lambda_i(q)})^2- \sum{\lambda_i(q)^2\big)}}{d^2(d-1)}\\
  &= \frac{2}{d(d-1)}\sum_{i<j} (\lambda_i(q)-\lambda_j(q))^2.
  \end{align*}
  We deduce that  if $q$ is not of the form $(x-c)^d$,
 \[
 \sigma_1(q)^2- \sigma_2(q) >0.
 \]

 Using the strict interlacing lemma \ref{Laguerre} again, we get for all $x$ $W[p',p''] (x)>0$ , and for $t>0$ close to zero,
 \[
 W[ \partial_t q_t \recsum_d p, q_t'\recsum_d p] (x)>0
 \]
 and a flipped inequality when $t<0$. It is in particular true for $x=\lambda_i(t)$ and we are done. 
\end{proof}
\section{Conclusion and extensions: information theoretic approach and Hermite polynomials}
The proof of the inequality is quite systematic and elementary, based purely on interlacing of  polynomials and their derivatives. However, this is just the beginning step for an entropic approach of roots of polynomials. It was proven in \cite{M} that finite free addition of polynomials leads to a natural law of large numbers (Theorem~6.5) and a central limit theorem, where sums of random variables are replaced by finite free additions of polynomials with bounded variance. Hermite polynomials are the exact analogue of Gaussians in classical probability as they are the only polynomials with two first cumulants only nonzero (see Section~6.2.2 in \cite{M}).\\
 If we define the expectation of $q$, real-rooted polynomial, as the mean of the roots, $\mathbb{E}(q):=\mu=\sigma_1(q)$, and the variance of $q$,  as $Var(q):= \sigma^2=  \sigma_1(q)^2- \sigma_2(q)$ (see Section~6.2 in \cite{M}), then we can canonically define generalized Hermite polynomials of degree $d$, mean $\mu$ and variance $\sigma^2$:
 \[
 \mathcal{H}_d[\mu,\sigma^2](x):= e^{-\frac{\sigma^2D^2}{2(d-1)}- \mu D} \{x^d\},
 \]
where $D$ is the derivative with respect to $x$ operator. We will denote by $\mathcal{H}_d[\sigma^2]$ the centered usual generalized Hermite polynomials.

 Let us recall that Gaussians are characterized by the following famous entropic property:
\begin{thm}[from \cite{Cover}]
The Gaussian of given variance is the random variable that maximizes differential entropy for random variables with this fixed variance. 
\end{thm}
It was proven in the framework of electrostatics that Hermite polynomials share the same optimal entropic property for our notion of polynomial entropy, which strengthens the analogy. 
\begin{thm}[Theorem~7.6.3 in \cite{Szego}]
$\mathcal{H}_d[\mu,\sigma^2]$, generalized Hermite polynomials with fixed variance $\sigma_2^2$, are the real-rooted polynomials maximizing polynomial entropy (or discriminant) for a given variance. 
\end{thm}
Let us consider the flow obtained when starting at a polynomial $p$ with real roots and adding Hermite polynomials with increasing variance (the equivalent of the perturbation of a random variable with some Gaussian noise, with variance proportional to $t$):
\[
 p(t,x):= p\recsum_d \mathcal{H}_d[2(d-1)t](x)= p\recsum_d [e^{-tD^2}{x^d}]= e^{-tD^2}[p \recsum_d x^d] =  e^{-tD^2}\{p\}(x)= \sum_{k=1}^d \frac{(-1)^kt^k}{k!}D^{2k}p(x),
\]
where the third equality follows from Corollary~\ref{commutederiv}. 
Following Terrence Tao's approach (see \cite{Tao}), we have that (where $D=\partial_x$):
\begin{align*}
\partial_tp(t,x)&=-\partial_{xx} p(t,x)    &\text{and}&&  p(0,x)&=p(x).
\end{align*}
That is, our flow is the reverse heat equation flow starting at $p$. Tao defines the entropy with a different sign convention, and he gets the fact that entropy increases with $t$, a special case of the general result we proved (Theorem~\ref{generalcase}), where we need to change the dilation factor from $t$ to $\sqrt{t}$. In a future paper, we will introduce a notion of Fisher information or energy (the one considered by Tao) that is intimately related to our polynomial entropy. For now let us just state the following conjecture, which strangely resembles Brunn-Minkowski's inequality: 
\begin{conj}[Power entropy inequality, see the free probability analogue in \cite{Voicuentropy}]
We have:
\[
\Dis(p \recsum_d q)^{\frac{1}{\binom{d}{2}}} \geq \Dis(p)^{\frac{1}{\binom{d}{2}}}+ \Dis(q)^{\frac{1}{\binom{d}{2}}},
\]
and there is equality only when $p$ and $q$ are generalized Hermite polynomials .
 It is actually a finite free analogue of the entropy power inequality, as the previous inequality can be read as follows:
\[
e^{2h(p\recsum_d q)}\geq e^{2h(p)}+e^{2h(q)}. 
\]
\end{conj}

This conjecture seems to be true numerically.

\renewcommand{\abstractname}{Acknowledgements}
\begin{abstract}
 I would like to thank my advisor Adam Marcus as well as my friend Masoud Zargar for taking time to read this paper. 
\end{abstract}

\end{document}